\def \N {\mathbb{N}}
\def \Z {\mathbb{Z}}
\def \A {\mathcal{A}}
\def \H {\mathcal{H}}
\def \T {\mathcal{T}}
\def \ol {\overline}
\def \WP {\operatorname{WP}}
\def \rev {\mathrm{rev}}
\def \C {\mathfrak{C}}
\DeclarePairedDelimiter{\set}{\{}{\}}
\DeclarePairedDelimiterX{\gset}[2]{\{}{\}}{\,#1:#2\,}
\newtheorem{theorem}{Theorem}
\newtheorem{example}[theorem]{Example}
\newtheorem{question}[theorem]{Question}
\begin{document}

\title{Word Problem Languages for Completely Regular Semigroups}

\author{Tara Brough}

\address{Centro de Matem\'{a}tica e Aplica\c{c}\~{o}es, Faculdade de Ci\^{e}ncias e Tecnologia \\
Universidade Nova de Lisboa, 2829--516 Caparica, Portugal}
\email{t.r.brough@fct.unl.pt (institutional); tarabrough@gmail.com (permanent and preferred)}

\maketitle

\begin{abstract}
Motivated by the question of which completely regular semigroups have 
context-free word problem, we show that for certain classes of languages $\C$
(including context-free), every completely regular semigroup that is a union 
of finitely many finitely generated groups with word problem in $\C$ also has 
word problem in $\C$.  We give an example to show that not all completely regular
semigroups with context-free word problem can be so constructed.
\end{abstract}

\section{Introduction}

The word problem of a semigroup is, informally, the problem of determining whether two words
over a given generating set represent the same element of the semigroup.
While in general the word problem is undecidable for semigroups \cite{post} and 
even for finitely-presented groups \cite{boone,nov}, there has been much research into 
semigroups (and especially groups) for which the word problem is in some sense `easily'
decidable, in that it can be solved by an automaton less powerful than a Turing machine.
Some authors approach this from a time- or space-complexity perspective
(see for example \cite{birget_time, kambites_adequate, lohrey_inverse}), 
and others from a language-theory perspective (for a survey up to 2014 see
\cite[Sections~5 and~6]{jones_formal} and for more recent developments
\cite{brough_inverse, bcp_contextfree, ho_multiple}).
In this paper we take the language-theoretic approach.

As a language, the \emph{word problem} of a semigroup $S$ with respect to a finite generating
set $A$ is commonly defined as the set \[\WP(S,A) = \gset{u\#v^\rev}{u,v\in A^+, u =_S v},\] where 
$v^\rev$ denotes the reverse of $v$ and $\#$ is a new symbol not in $A$.
This definition is called the \emph{unfolded word problem} by the author and 
Cain \cite{bc_hierarchy}, to distinguish it from the \emph{two-tape word problem}, which 
is the relation $\gset{(u,v)}{u,v\in A^+, u=_S v}$.
So far the study of semigroup word problems as languages has mainly been for 
context-free languages in the unfolded definition \cite{hoffmann_contextfree, bcp_contextfree},
and for rational relations in the two-tape definition \cite{pfeiffer_iteration, brough_inverserational}.
The relationship between the two definitions was studied in \cite{bc_hierarchy}.
If $\WP(S,A)\in \C$ for some finite generating set $A$, and $\C$ is a language
class closed under inverse homomorphism, then $\WP(S,B)\in \C$ for all finite generating sets 
$B$ of $S$ and we say that \emph{$S$ is $U(\C)$}.

Semigroups with context-free word problem were studied initially in \cite{hoffmann_contextfree},
with one of the main results being the characterisation of the completely simple semigroups with 
context-free word problem as (finitely generated) Rees matrix semigroups over virtually free groups.
This is a fairly straightforward consequence of two major results: the characterisation by Rees
of the completely simple semigroups as what are now called Rees matrix semigroups 
over groups \cite{rees_matrix}, and Muller and Schupp's characterisation of groups with context-free word problem as the finitely generated virtually free groups \cite{muller_groups}.  
In a further study of semigroups with 
context-free word problem, the author, Cain and Pfeiffer showed more generally that if $\C$ is a class of languages closed under inverse gsm mappings and intersection
with regular languages, then a finitely generated Rees matrix semigroup 
over a semigroup $S$ is $U(\C)$ if and only if $S$ is $U(\C)$ \cite[Theorem~7]{bcp_contextfree}.
They also showed that if furthermore $\C$ is closed under union, then 
a \emph{strong} semilattice of semigroups 
$S_1,\ldots,S_n$ is $U(\C)$ if and only if each $S_i$ is $U(\C)$ 
\cite[Theorem~6]{bcp_contextfree}.

These results led to a question by M\'{a}rio Branco \cite{branco_private} 
on which completely regular semigroups have 
context-free word problem.
A semigroup is \emph{completely regular} if it can be expressed as a disjoint union of groups.
Clifford showed that a semigroup is completely regular if and only if it is isomorphic to a semilattice of 
completely simple semigroups \cite{clifford_relativeinverse}.  Thus the results of the author, Cain 
and Pfeiffer naturally lead to questions about which completely regular semigroups are $U(\C)$ for language classes $\C$ with all the closure properties mentioned.

Previous research on word problems of completely regular semigroups has concentrated 
on determining decidability of the word problem in certain varieties of completely regular semigroups
\cite{gerhard_varieties} and solving the word problem for free completely regular semigroups
\cite{gerhard_free, kadourek_free}.  (Free completely regular semigroups are not finitely generated as semigroups, though they are as (2,1,0)-algebras, and so a language-theoretic study of their word problem would require a different definition from the one used here.)
The present paper appears to be the first to consider these word problems from a language-theoretic 
perspective.

\section{Preliminaries}

\subsection{Structure of completely regular semigroups}

The semilattice structure of completely regular semigroups was studied in detail
in the 1960s and '70s by Lallement and Petrich \cite{lallement_demi, petrich_structure}.  
A central concept in their work is the notion of a bitranslation:
Let $S$ be a semigroup.
A \emph{bitranslation} of $S$ is a pair $(\lambda,\rho)$ such that
$\lambda(xy) = (\lambda x)y$ and
$(xy)\rho = x(y\rho)$ for all $x,y\in S$
(that is, $\lambda$ is a left translation and $\rho$ a right translation), 
and furthermore $x(\lambda y) = (x\rho) y$ for all $x,y\in S$.
The \emph{translational hull} $\Omega(S)$ of $S$ is the set of all bitranslations of $S$.
A bitranslation $(\lambda,\rho)$ is \emph{inner} if for some $a\in S$ we have 
$\lambda x = ax$ and $x\rho = xa$ for all $x\in S$.  
We write $(\lambda,\rho) = (\lambda_a,\rho_a) =  \pi_a$
in this case.
The set of all inner bitranslations of $S$ is $\Pi(S)$.

Lallement showed that the multiplication in a completely regular semigroup
is determined by certain maps from the constituent completely simple 
semigroups to the translational hulls of the completely simple semigroups lower in 
the semilattice, as follows.

\begin{theorem}[{\cite[Th\'{e}or\`{e}me 2.19]{lallement_demi}}]\label{lallement}
Let $Y$ be a semilattice; to each $\alpha\in Y$ associate a completely simple
semigroup $S_\alpha$ and suppose that $S_\alpha\cap S_\beta = \emptyset$
if $\alpha\neq \beta$.
For each pair $\alpha,\beta\in Y$ with $\alpha\geq \beta$,
let $\Phi_{\alpha,\beta}: S_\alpha\rightarrow \Omega(S_\beta)$ be a function satisfying:
\begin{enumerate}
\item $\Phi_{\alpha,\alpha}: a\mapsto \pi_a \quad (a\in S_\alpha)$;
\item $(S_\alpha \Phi_{\alpha,\alpha\beta})(S_\beta \Phi_{\beta,\alpha\beta})
\subseteq \Pi(S_{\alpha\beta})$;
\item If $\alpha\beta>\gamma$ and $a\in S_\alpha$, $b\in S_\beta$, then
\[(a\star b)\Phi_{\alpha\beta,\gamma} = (a\Phi_{\alpha,\gamma})(b\Phi_{\beta,\gamma}),\]
\end{enumerate}

where multiplication in $S$ is defined by
\[ a\star b = [(a\Phi_{\alpha,\alpha\beta})(b\Phi_{\beta,\alpha\beta})]\Phi_{\alpha\beta,\alpha\beta}^{-1}
\quad (a\in S_\alpha, b\in S_\beta).\]
Then $S$ is a completely regular semigroup.  Conversely, every completely regular semigroup
can be so constructed.
\end{theorem}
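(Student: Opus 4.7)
The plan is to establish both directions of the theorem: that the structural data yields a completely regular semigroup, and that every completely regular semigroup arises this way. For the forward direction, I would first check that $\star$ is well-defined. Condition (ii) places $(a\Phi_{\alpha,\alpha\beta})(b\Phi_{\beta,\alpha\beta})$ inside $\Pi(S_{\alpha\beta})$, so the preimage under $\Phi_{\alpha\beta,\alpha\beta}$ exists provided the map $c\mapsto \pi_c$ is injective on each $S_\gamma$. This is the standard fact that a completely simple semigroup is weakly reductive, verified quickly in Rees matrix coordinates. Closure (that $a\star b$ lies in $S_{\alpha\beta}$) is then automatic, and complete regularity of $S=\bigcup_\alpha S_\alpha$ is immediate since each completely simple $S_\alpha$ is a union of groups and the $S_\alpha$ partition $S$.

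The heart of the forward direction is associativity. For $a\in S_\alpha$, $b\in S_\beta$, $c\in S_\gamma$, both $(a\star b)\star c$ and $a\star(b\star c)$ lie in $S_{\alpha\beta\gamma}$, so it suffices to show that their images under $\Phi_{\alpha\beta\gamma,\alpha\beta\gamma}$ coincide in $\Omega(S_{\alpha\beta\gamma})$. Applying (iii) to the outer product — with (i) serving as a substitute in the degenerate cases where the strict inequality $\alpha\beta>\alpha\beta\gamma$ fails — and then (iii) or (i) again to the inner product, one reduces both sides to the triple product of bitranslations
\[
(a\Phi_{\alpha,\alpha\beta\gamma})(b\Phi_{\beta,\alpha\beta\gamma})(c\Phi_{\gamma,\alpha\beta\gamma})
\]
in $\Omega(S_{\alpha\beta\gamma})$, where associativity is inherited from the composition of maps. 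This case analysis, combined with careful bookkeeping of which $\Phi$-map applies at each reduction, is where most of the work lies and constitutes the main obstacle.

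For the converse, I start from a completely regular $S$ and invoke Clifford's theorem to decompose it as a semilattice $Y$ of completely simple semigroups $S_\alpha$, with $S_\alpha S_\beta \subseteq S_{\alpha\beta}$. For $\alpha\geq\beta$ and $a\in S_\alpha$, the inclusions $aS_\beta, S_\beta a \subseteq S_{\alpha\beta}=S_\beta$ let me define $\Phi_{\alpha,\beta}(a)$ to be the restriction of the inner bitranslation $\pi_a$ of $S$ to $S_\beta$, which is a bitranslation of $S_\beta$ by associativity in $S$. Axiom (i) is immediate; axiom (ii) holds because the composite of the restrictions of $\pi_a$ and $\pi_b$ to $S_{\alpha\beta}$ equals the restriction of $\pi_{ab}$, witnessed by $ab\in S_{\alpha\beta}$; axiom (iii) follows similarly from associativity in $S$. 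A final unwinding shows that the reconstructed product $\star$ coincides with the multiplication of $S$, completing the proof.
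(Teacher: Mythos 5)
The paper offers no proof of this statement: it is Lallement's structure theorem, quoted verbatim with a citation to \cite[Th\'{e}or\`{e}me 2.19]{lallement_demi}, so there is no in-paper argument to compare yours against. Judged on its own terms, your sketch follows the standard line (essentially Lallement's and Petrich's): weak reductivity of completely simple semigroups makes $\Phi_{\alpha\beta,\alpha\beta}^{-1}$ well defined on $\Pi(S_{\alpha\beta})$, associativity is checked by pushing both bracketings of $a\star b\star c$ down to the triple product of bitranslations in $\Omega(S_{\alpha\beta\gamma})$ and invoking injectivity of $c\mapsto\pi_c$ there, and the converse restricts inner bitranslations of $S$ to the lower components of the Clifford decomposition. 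All of the key ingredients are correctly identified, including the need to treat the degenerate case $\alpha\beta=\alpha\beta\gamma$ separately (where the definition of $\star$ itself, rather than axiom (iii), supplies the identity $(a\star b)\Phi_{\alpha\beta,\alpha\beta}=(a\Phi_{\alpha,\alpha\beta})(b\Phi_{\beta,\alpha\beta})$).

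Two points deserve more care than the sketch gives them. First, the manipulation of the triple product presupposes that $\Omega(S_\delta)$ is closed under the componentwise product of bitranslations, i.e.\ that the product of two linked pairs is again linked; this again follows from weak reductivity of $S_\delta$ but should be stated, since condition (ii) only directly controls products landing in $\Pi(S_{\alpha\beta})$. Second, the associativity case analysis (which of $\alpha\beta$, $\beta\gamma$, $\alpha\beta\gamma$ coincide) is where the real work of the theorem lives, and you have flagged it but not carried it out; as a proof of the quoted classical result this is an acceptable reduction to routine verification, but it is not yet a complete argument.
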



Upon quoting the above theorem, Petrich \cite{petrich_structure} mentions the following points, 
worth also mentioning here:
``It should be remarked that conditions (i) and (iii) in Theorem 1 imply that each 
$\Phi_{\alpha,\beta}$ is a homomorphism, and since $S_\alpha$ is bisimple, 
we then have that $S_\alpha\Phi_{\alpha,\beta}$ is contained in a ${\mathcal D}$-class of
$\Omega(S_\beta)$.
Also note that condition (ii) appears only in order to make condition (iii) meaningful."

For a set $X$, denote the left and right transformation semigroups of $X$ respectively
by $\T(X)$ and $\T'(X)$.
The translational hull of a Rees matrix semigroup $S = \mathcal{M}[I,G,\Lambda;P]$ can be isomorphically embedded in a direct product $\T(I)\times G^\Lambda\times \T'(\Lambda)$,
where $G^\Lambda$ is the set of all mappings from $\Lambda$ into $G$.
We denote the constant transformations mapping $X$ to $x\in X$ by 
$\tau_x$ in $\T(X)$ and $\tau'_x$ in $\T'(X)$.
The following is extracted from 
\cite[Theorem~2]{petrich_structure}, which we do not quote in full as we do not require all the details.

\begin{theorem}
\label{petrich_transhull}
Let $S = \mathcal{M}[I,G,\Lambda;P]$ be a Rees matrix semigroup.
There exists an isomorphism from $\Omega(S)$ to a subsemigroup of  
$\T(I)\times G^\Lambda\times \T'(\Lambda)$, and for $(i,g,\lambda)\in S$, the image
of $\pi_{(i,g,\lambda)}$ under this isomorphism is $(\tau_i,h,\tau'_\lambda)$ for some 
$h\in G^\Lambda$.
\end{theorem}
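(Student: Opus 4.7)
The strategy is to extract a canonical form for an arbitrary bitranslation of $S = \mathcal{M}[I,G,\Lambda;P]$ by exploiting the explicit Rees-matrix multiplication, read off the embedding from that canonical data, and then compute the image of an inner bitranslation directly. To avoid a notational clash with $\lambda \in \Lambda$, I write the bitranslation as $(L, R)$ rather than $(\lambda, \rho)$.

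For the left translation $L$, substitute $x = (i, g, \nu)$ and $y = (j, h, \mu)$ into $L(xy) = (Lx)y$, giving
\[ L(i,\; g P_{\nu,j} h,\; \mu) \;=\; L(i,g,\nu)\,(j,h,\mu). \]
The right-hand side has third coordinate $\mu$, so $L$ preserves third coordinates. Since $g P_{\nu,j} h$ ranges over all of $G$ as $g$ varies, comparison of first coordinates shows that the first coordinate of $L(i, g, \mu)$ depends only on $i$, yielding a map $\phi \in \T(I)$. Comparison of middle coordinates then forces $L(i,g,\mu) = (\phi(i),\; f(i)\, g,\; \mu)$ for some $f : I \to G$; varying $\nu$ in the derived identity shows that $f$ is independent of $\mu$. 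A dual analysis of $R$ shows that $(j,h,\lambda)R = (j,\; h\,k(\lambda),\; \psi(\lambda))$ for some $\psi \in \T'(\Lambda)$ and $k \in G^\Lambda$.

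The bitranslation identity $x(Ly) = (xR)y$ with generic $x = (j,h,\mu)$ and $y = (i,g,\nu)$ expands, after cancelling the outer coordinates and the shared factors $h$ and $g$, to
\[ P_{\mu,\, \phi(i)}\, f(i) \;=\; k(\mu)\, P_{\psi(\mu),\, i} \qquad (i \in I,\; \mu \in \Lambda). \]
Fixing any $\mu$, this identity determines $f$ from $(\phi, k, \psi)$, and dually it determines the $G^I$-component of $R$. Therefore the assignment $(L, R) \mapsto (\phi, k, \psi)$ is a well-defined injection into $\T(I) \times G^\Lambda \times \T'(\Lambda)$. Multiplicativity is a direct computation: applying $L_1 L_2$ and $R_2 R_1$ to a generic element yields $\phi_1 \phi_2$ in the first component and the appropriate composite in $\T'(\Lambda)$ in the third, and the middle component of the product is then pinned down by the compatibility identity above.

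For the inner bitranslation $\pi_a$ with $a = (i_0, g_0, \lambda_0)$, direct computation gives $L_a(i,g,\mu) = (i_0,\; g_0 P_{\lambda_0, i}\, g,\; \mu)$ and $(j,h,\lambda)R_a = (j,\; h\, P_{\lambda, i_0}\, g_0,\; \lambda_0)$, so the first component is $\tau_{i_0}$, the third is $\tau'_{\lambda_0}$, and the middle is the map $\lambda \mapsto P_{\lambda, i_0} g_0 \in G^\Lambda$, exactly as claimed. The main technical obstacle is the first step: carefully choosing substitutions in $L(xy) = (Lx)y$ to force the canonical form of $L$ and $R$ one coordinate at a time. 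Once this canonical form is in hand, the rest of the argument is largely bookkeeping.
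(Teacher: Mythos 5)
Your overall route is genuinely different from the paper's: the paper proves this theorem in one line, by observing that the required map is the inverse of the isomorphism $\chi$ of \cite[Theorem~2]{petrich_structure}, whereas you reconstruct the embedding from scratch. Most of your reconstruction is correct and is essentially how Petrich's theorem is proved: the canonical forms $L(i,g,\mu)=(\phi(i),f(i)g,\mu)$ and $(j,h,\lambda)R=(j,hk(\lambda),\psi(\lambda))$, the linking identity $P_{\mu,\phi(i)}f(i)=k(\mu)P_{\psi(\mu),i}$, the injectivity of $(L,R)\mapsto(\phi,k,\psi)$, and the computation showing $\pi_{(i_0,g_0,\lambda_0)}\mapsto(\tau_{i_0},h,\tau'_{\lambda_0})$ with $h:\lambda\mapsto P_{\lambda,i_0}g_0$ are all right. (Minor slip: the linking identity recovers the $G^I$-component $f$ of $L$; the right translation $R$ has no $G^I$-component, since its data $(k,\psi)$ is already carried by the image.)

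The genuine gap is the multiplicativity step. Composing two bitranslations gives, in your coordinates, $(\phi_1,k_1,\psi_1)(\phi_2,k_2,\psi_2)=(\phi_1\phi_2,\ \lambda\mapsto k_1(\lambda)k_2(\psi_1(\lambda)),\ \psi_1\psi_2)$: the middle coordinate is twisted by $\psi_1$, so your injection is \emph{not} a homomorphism into $\T(I)\times G^\Lambda\times\T'(\Lambda)$ with componentwise multiplication, and saying the middle component is ``pinned down by the compatibility identity'' does not address this --- that identity determines $f$ from $(\phi,k,\psi)$, while the middle component of a product is an independent computation that must be matched against whatever multiplication the target carries. Nor can a cleverer choice of middle coordinate rescue the componentwise reading: a componentwise-multiplicative middle coordinate would restrict, on the copy of $S$ inside $\Omega(S)$ given by inner bitranslations, to a family of homomorphisms from $S$ to the group $G$, all of which kill the normal subgroup generated by the normalised sandwich entries; so they cannot jointly separate the elements of an $\H$-class whenever that subgroup is nontrivial, contradicting injectivity. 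The correct reading (and what Petrich actually constructs) is that the set $\T(I)\times G^\Lambda\times\T'(\Lambda)$ carries this twisted, wreath-type multiplication. You should either state the target with that multiplication or verify your triples against Petrich's product explicitly. Note that everything the paper later uses does survive your argument: the projections to $\T(I)$ and $\T'(\Lambda)$ are genuine homomorphisms (the twist lives only in the middle coordinate), and the inner bitranslations land on constant transformations exactly as you computed.
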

\begin{proof}
The isomorphism in question is the inverse of the isomorphism $\chi$ in 
\cite[Theorem~2]{petrich_structure}.
\end{proof}

\subsection{Operations on languages}

The operations used in this paper are union, inverse gsm mapping and 
intersection with regular languages.  
For the definition of inverse gsm mappings, and an overview of which language classes are 
closed under each of these operations, see \cite[Section~11.2]{hopcroft_automata}.
A \emph{full trio} is a class of languages closed under homomorphism, inverse homomorphism 
and intersection with regular languages.  Many well-known full trios, such as the regular, 
context-free, ET0L and indexed languages, are also closed under union and 
inverse gsm mappings and thus satisfy the hypothesis of our main result 
(Theorem~\ref{main}).
An example of a class that is closed under our three operations but is not a full trio is the 
poly-context-free languages (intersections of finitely many context-free languages)
\cite[Section~2.2]{brough_polycf}, which are not closed under homomorphism.

\section{Constructing completely regular semigroups with word problem in $\C$ from 
finitely many groups with word problem in $\C$}

\begin{theorem}\label{main}
Let $S$ be a completely regular semigroup that is a union of finitely many 
finitely generated $\C$-groups, where $\C$ is a class of languages closed 
under union, inverse gsm mappings and intersection 
with regular languages.  Then the (unfolded) word problem of $S$ is in $\C$.
\end{theorem}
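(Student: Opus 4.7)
The plan is to reduce $\WP(S,A)$ to the word problems of the structure groups of $S$'s completely simple components, using Lallement's theorem and Petrich's embedding (Theorem~\ref{petrich_transhull}) of the translational hull of a Rees matrix semigroup. Since $S$ is a union of finitely many groups, its set of idempotents is finite; hence in its Clifford decomposition $S=\bigsqcup_{\alpha\in Y}S_\alpha$, with $S_\alpha=\mathcal{M}[I_\alpha,G_\alpha,\Lambda_\alpha;P_\alpha]$, the semilattice $Y$ and all $I_\alpha$ and $\Lambda_\alpha$ are finite. Each maximal subgroup $H_e$ is the union of those $G_i$ having identity $e$; B.H.~Neumann's lemma then yields some $G_i\le H_e$ of finite index, and a standard Reidemeister--Schreier gsm rewriting, combined with closure of $\C$ under inverse gsm and intersection with regular, shows that $\C$-membership passes upward along finite-index inclusions of finitely generated groups. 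Consequently every structure group $G_\alpha$ (isomorphic to any maximal subgroup of $S_\alpha$) is itself a finitely generated $\C$-group.

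Fix a finite generating set $A$; each $a\in A$ lies in some $S_{\alpha_a}$, so a word $w=a_1\cdots a_k$ satisfies $[w]\in S_{\alpha_w}$ for $\alpha_w=\alpha_{a_1}\cdots\alpha_{a_k}\in Y$. Since $Y$ is finite and $\C$ is closed under union, it suffices to prove, for each $\alpha\in Y$, that
\[L_\alpha=\{\,u\# v^{\rev} : \alpha_u=\alpha_v=\alpha,\ [u]=[v]\text{ in }S_\alpha\,\}\in\C.\]
Applying Lallement's formula iteratively (using condition (iii) together with $\Phi_{\alpha,\alpha}\colon c\mapsto\pi_c$) one gets $\pi_{[w]}=\prod_i a_i\Phi_{\alpha_i,\alpha}$ in $\Omega(S_\alpha)$ whenever $\alpha_w=\alpha$. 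Transporting this product via Theorem~\ref{petrich_transhull} into $\T(I_\alpha)\times G_\alpha^{\Lambda_\alpha}\times\T'(\Lambda_\alpha)$ and computing its componentwise composition rule, I would check that $[u]=[v]$ in $S_\alpha$ is equivalent to the conjunction of (i)~equality of the $\T(I_\alpha)$-components, (ii)~equality of the $\T'(\Lambda_\alpha)$-components, and (iii)~equality in $G_\alpha$ of the values $q'_u(\mu_0)$ and $q'_v(\mu_0)$ of the $G_\alpha^{\Lambda_\alpha}$-components at a single fixed $\mu_0\in\Lambda_\alpha$.

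Conditions (i) and (ii) are finite-state and together cut out a regular language $R_\alpha$. For (iii), let $X_\alpha$ be a finite generating set for the $\C$-group $G_\alpha$, put $W_\alpha=\{w\in(X_\alpha\cup X_\alpha^{-1})^*:w=_{G_\alpha}1\}\in\C$, and construct a nondeterministic gsm $T_\alpha$ reading $u\# v^{\rev}$ and emitting a word over $X_\alpha\cup X_\alpha^{-1}$ which, on accepting runs, evaluates in $G_\alpha$ to $q'_u(\mu_0)\,q'_v(\mu_0)^{-1}$. While scanning $u$ the machine carries $\mu^{(i)}=\nu_{a_{i-1}}\cdots\nu_{a_1}(\mu_0)$ in its state and, on reading $a_i$, emits a precomputed word over $X_\alpha\cup X_\alpha^{-1}$ representing $q'_{a_i}(\mu^{(i)})\in G_\alpha$ (a finite lookup indexed by $A\times\Lambda_\alpha$). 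While scanning $v^{\rev}=b_\ell\cdots b_1$ the inverse factors $q'_{b_j}(\mu^{(j)}_v)^{-1}$ must appear in the output in the reading order $j=\ell,\ldots,1$, but the argument $\mu^{(j)}_v=\nu_{b_{j-1}}\cdots\nu_{b_1}(\mu_0)$ depends on letters not yet scanned; $T_\alpha$ therefore guesses $\mu^{(j)}_v$ nondeterministically when reading $b_j$ and verifies consistency $\mu^{(j+1)}_v=\nu_{b_j}(\mu^{(j)}_v)$ against the previously guessed $\mu^{(j+1)}_v$, together with $\mu^{(1)}_v=\mu_0$ at the end, all of which is finite-state. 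Then $L_\alpha=T_\alpha^{-1}(W_\alpha)\cap R_\alpha\in\C$ by the closure hypotheses, and taking the union over $\alpha\in Y$ finishes the proof. The main obstacle I anticipate is orchestrating $T_\alpha$ so that the right-to-left dependency forced by reading $v^{\rev}$ is replaced cleanly by nondeterministic guessing plus purely finite-state verification; a secondary technical point is pinning down the Petrich composition rule precisely enough to derive the equivalence (i)--(iii).
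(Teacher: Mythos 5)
Your proposal is correct in outline, but it takes a genuinely different route from the paper. The paper never unwinds the Lallement--Petrich composition of bitranslations for a whole word: it splits $\WP(S,X)$ by $\H$-class rather than by $\mathcal{D}$-class, chooses idempotents $e=(i,1_G,1)$ and $f=(1,1_G,\lambda)$ acting as left and right identities on the fixed $\H$-class, and uses the observation that $u\#v^\rev$ lies in the word problem iff $eu\#fv^\rev$ does. Since every letter of a word evaluating into $S_\alpha$ lies in some $S_\beta$ with $\beta\geq\alpha$, every partial product of $eu$ (and of $vf$, accumulated from the right end while reading $v^\rev$) stays inside $S_\alpha$, so a \emph{deterministic} gsm rewrites the input letter by letter into a word over $X_\alpha$; the problem is then handed off to the known fact \cite[Theorem~2]{hoffmann_contextfree} that finitely generated completely simple semigroups over $\C$-groups are $U(\C)$, with the $\H$-class restriction checked by a finite automaton exactly as in your conditions (i)--(ii) via Theorem~\ref{petrich_transhull}. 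Your route instead re-proves the completely simple case from first principles inside the translational hull; the composition rule you would need is $m(\mu)=h_1(\mu)\,h_2(\mu\psi_1')$, which is precisely what your transducer computes, and the reduction of middle-component equality to a single evaluation point $\mu_0$ is legitimate because for inner bitranslations $h(\mu)=p_{\mu i}g$ with $p_{\mu_0 i}$ invertible. The right-to-left dependency you worry about is handled in the paper by the direction of accumulation (carrying one letter of $X_\alpha$ in the state and flushing the rest of the normal form to output), which avoids nondeterminism; your guess-and-verify scheme is also sound, since the consistency conditions together with $\mu_v^{(1)}=\mu_0$ force a unique accepting guess sequence. Finally, you are in one respect more careful than the paper: the paper simply takes each structure group $G_\alpha$ to be a $\C$-group, which is immediate when the union of groups is disjoint (the given groups are then exactly the maximal subgroups), whereas for a not-necessarily-disjoint union one needs exactly your B.~H.~Neumann plus finite-index-extension argument.
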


\begin{proof}
By the result of Clifford \cite{clifford_relativeinverse}, $S$ can be expressed as a 
finite semilattice of finitely generated completely simple semigroups.
Let $S = \bigcup_{\alpha\in Y} S_\alpha$ be a semilattice of 
completely simple semigroups $S_\alpha$, with $Y$ finite.  
For each $\alpha\in Y$, let $G_\alpha$ be a $\C$-group, $I_\alpha$ and $\Lambda_\alpha$ finite sets  
and $P_\alpha$ a $\Lambda_\alpha\times I_\alpha$ matrix such that
$S_\alpha = {\mathcal M}[G_\alpha; I_\alpha, \Lambda_\alpha; P_\alpha]$.
We have $\WP(S_\alpha,X_\alpha)\in \C$ for all $\alpha\in Y$ by 
\cite[Theorem~2]{hoffmann_contextfree}.
Since $I_\alpha$ and $\Lambda_\alpha$ are finite, each $S_\alpha$ has finitely many 
$\H$-classes $H^\alpha_{i\lambda}$, $i\in I_\alpha$, $\lambda\in \Lambda_\alpha$.
Let $X_\alpha$ be a finite generating set for $S_\alpha$, for each $\alpha\in Y$,
and let $X = \bigcup_{\alpha\in Y} X_\alpha$, which is a finite generating set for $S$.
For $\alpha\in Y$, $i\in I_\alpha$, $\lambda\in \Lambda_\alpha$, define
\[
L^\alpha_{i\lambda} = \gset{ u\# v^\rev }{ u\#v^\rev\in \WP(S,X), \ol{u}, \ol{v}\in H^{\alpha}_{i\lambda}}.
\]
That is, $L^\alpha_{i\lambda}$ is the restriction of $\WP(S,X)$ to the $\H$-class 
$H^\alpha_{i\lambda}$.  Thus $\WP(S,X)$ is the union of the finitely many languages $L^\alpha_{i\lambda}$.

Fix $\alpha\in Y$, $i\in I_\alpha$, $\lambda\in \Lambda_\alpha$ and let $H = H^\alpha_{i\lambda}$,
$L = L^\alpha_{i\lambda}$ and $G = G_\alpha$.  We will show that $L\in \C$.  
Since $\alpha,\lambda,i$ are arbitrary and $\C$ is closed under 
union, this will establish that $\WP(S,X)\in \C$.

We may assume (by change of generators and isomorphism) that the matrix $P_\alpha$ 
is normalised to have the identity $1_G$ of $G$ in every position in the first row and column.
Let $e = (i, 1_G, 1)$ and $f = (1, 1_G, \lambda)$.  Then $e$ and $f$ are idempotents acting 
as left and right identities respectively on $H$.  We may assume (by change of generators
if necessary) that $e,f\in X_\alpha$.

For any $u,v\in X^*$ with $\ol{u},\ol{v}\in H$, we have $u\#v^\rev\in L$ if and only 
if $eu\#fv^\rev\in L$, since $eu = u$ and $vf = v$.   Thus $L$ is the intersection of 
two languages $L_1 = \gset{u\#v}{eu\#fv^\rev\in \WP(S_\alpha,X_\alpha)}$ and 
$L_2 = \gset{u\#v}{\ol{u},\ol{v}\in H}$.  Since $H$ consists precisely of all $(i,g,\lambda)$,
$g\in G$, Theorem~\ref{petrich_transhull} ensures that membership in $L_2$ can 
be computed by a finite automaton, since we only need to perform multiplication in the 
finite transformation semigroups $\T(I)$ and $\T'(\Lambda)$ and check that the results
are the constant functions $\tau_i$ and $\tau'_\lambda$.  Thus $L_2$ is regular and so 
it suffices to show that $L_1\in \C$.

For each $x\in X_\alpha$ and $y\in X_\beta$ for $\beta\geq\alpha$, choose 
$w_{xy}, z_{xy}\in X_\alpha^*$ and $t_{xy}, s_{xy}\in X_\alpha$ such that $xy = w_{xy}t_{xy}$
and $yx = s_{xy} z_{xy}$.
If $\beta = \alpha$ we choose $w_{xy} = z_{xy} = x$ and $t_{xy} = s_{xy} = y$.
Let $W$ be the set of all $w_{xy}$, and define $Z$, $S$, $T$ similarly.
Moreover, let $W_e = \set{w_{ey}}$ and $Z_f = \set{z_{fy}}$.

Define a gsm-mapping $\Phi: X^*\rightarrow X_\alpha^*$ by an automaton $\A$ as follows:
The state set is $X_\alpha\cup X'_\alpha\cup \{\$\}$, where $X'_\alpha = \gset{x'}{x\in X_\alpha}$
is a copy of $X_\alpha$ and $\$$ is the final state.  
The start state is $e$, and for each state $x\in X_\alpha$ and input symbol $y\in X_\beta$
there is a transition to $t_{xy}$ outputting $w_{xy}$.  Each state $x\in X_\alpha$ also has a 
transition on $\#$ to $f'$ with output $x$.  Further, from each $x'\in X'_\alpha$
and $y\in X_\beta$ there is a transition to $s'_{xy}$ with output $z_{xy}^\rev$.
There is also an $\varepsilon$-transition from each $x'$ to the final state $\$$ with output $x$.
The final output on reading $u\#v^\rev$ with $u,v\in X_\alpha^*$ is some $\hat{u}\#\hat{v}^\rev$
with $\hat{u}\in W_e W^* T$ representing $eu$ and $\hat{v}\in S Z^* Z_f$ representing $vf$.
Hence the preimage of $\WP(S_\alpha,X_\alpha)\cap W_e W^* T\# S Z^* Z_f$ in $X^*$
is $L_1 = \gset{ u\#v}{ eu\#fv\in \WP(S_\alpha,X_\alpha)}$, which is in $\C$ since $\C$ is closed
under intersection with regular languages and inverse gsm-mappings.

Hence $L\in \C$ and so $\WP(S,X)$, as the union of finitely many such languages $L$, 
is in $\C$.
\end{proof}

In particular, since the groups with context-free word problem are the virtually free groups
\cite{muller_groups}, a union of finitely many finitely generated virtually free groups has
context-free word problem.

\section{Examples showing the necessity for further work}

The converse of Theorem~\ref{main} does not hold.  Examples abound, but here is one
of the most straightforward.

\begin{example}
Let $S$ be the subsemigroup of $\T(\Z)$ generated by $x: i\mapsto i+1$, 
$x^{-1}: i\mapsto i-1$ and the constant transformations $\tau_j: i\mapsto j$
for each $j\in \Z$.  Then $S$ is a completely regular semigroup with 
context-free word problem, but $S$ cannot be expressed as a 
union of finitely many finitely generated groups.
\end{example}
\begin{proof}
$S$ can be expressed as the union of $\Z = \langle x, x^{-1}\rangle$ and infinitely 
many trivial groups $\langle \tau_i\rangle$, and is thus completely regular.
If $S$ is a union of finitely many finitely generated groups, then it can be expressed
as a finite semilattice of finitely generated completely simple semigroups 
\cite{clifford_relativeinverse}.
It is a semilattice of $\Z = \langle x,x^{-1}\rangle$ and the infinite right-zero 
semigroup $T = \gset{\tau_i}{i\in \Z}$, but since the orbit of any $\tau_i\in T$
under the action of $\Z$ is the whole of $T$, and $T$ is not finitely generated,
it is not possible to decompose $S$ as a semilattice of finitely generated 
semigroups.

The word problem of $S$ with respect to the generating set $\set{x,x^{-1},\tau_0}$
is recognised by a pushdown automaton $\A$ that behaves like the standard automaton 
for $\WP(\Z,\set{x,x^{-1}})$ with the following modifications: There three extra states
$p_0$, $p_1$ and $p_2$, which are used for words containing $\tau_0$; before $\#$ has 
been seen the stack is erased whenever $\tau_0$ is seen, and the first time this 
happens $\A$ moves to state $p_0$, which behaves the same as the initial state
on $\{x,x^{-1},\tau_0\}$.  
If $\A$ is in state $p_0$ on input $\#$, it moves to state $p'$ instead of the usual state
(call it $q_1$).  State $p_1$ acts in the same way as $q_1$ except that it moves to state $p_2$
(which accepts all inputs in $\set{x,x^{-1},\tau_0}$ and is final)
on input $\tau_0$ if and only if the stack is empty, whereas $q_1$ fails on input $\tau_0$.
Thus on input $u\#v^\rev$, $\A$ first records $\ol{u}\in \gset{x^i, \tau_i}{i\in \Z}$ using the 
stack and state, then checks this against $v^\rev$, making sure that $v$ contains 
$\tau_0$ if and only if $u$ does.  Hence $S$ has context-free word problem.
\end{proof}

However, unsurprisingly not every finitely generated semigroup 
that is a union of context-free groups has context-free word problem.

\begin{example}
Let $S$ be the union of the free group $F$ of rank $2$ generated by $\set{x,y}$
and infinitely many idempotents $b_i, i\in \Z$, with multiplication
defined as follows:  $B = \gset{b_i}{i\in \N_0}$ is a left-zero semigroup;
the right action of $F$ on $B$ is the identity; $xb_i = b_{i+1}$ for all $i\in \Z$;
$y$ translates the set $\gset{b_{-2^n}, b_{2^n}}{n\in \N_0}$, ordered by subscript,
to the right; the actions of $x^{-1}$ and $y^{-1}$ are the inverse of the actions of 
$x$ and $y$ respectively.  Then $S$ is a finitely generated union of context-free groups that does
not have context-free word problem.
\end{example}
\begin{proof}
We first establish that the definition of the multiplication is sound.
The multiplication indeed defines left and right actions of $F$ on $B$,
since the actions of $x$ and $y$ are invertible and the actions of 
$x^{-1}$ and $y^{-1}$ respectively are the inverses of the corresponding actions.
Moreover, the fact that $B$ is a left-zero semigroup and the right action of 
$F$ fixes $B$ ensures that $(b_i g) b_j = b_i (g b_j)$
for all $i,j\in \N_0$ and $g\in F$ (that is, the left and right actions of each 
element of $F$ form a bitranslation). 

Free groups and trivial groups are context-free, so $S$ is a union of context-free groups;
and $S$ is finitely generated by $X = \set{x,y,x^{-1},y^{-1},b_0}$.
Let $L = \WP(S,X)\cap x^*b_0\#b_0y^*$.
Then $L = \gset{x^{2^n}b_0\#b_0y^n}{n\in \N_0}$, which is easily shown by the 
pumping lemma not to be context-free.  Since $L$ is the intersection of the word 
problem of $S$ with a regular language, $S$ does not have
context-free word problem.
\end{proof}

The full characterisation of completely regular semigroups with context-free word problem
remains open.  More generally, though probably considerably more challengingly,
the following question remains open.

\begin{question}
Let $\C$ be a class of languages closed under union, inverse gsm mappings
and intersection with regular languages.  When is a semilattice of Rees matrix semigroups
over $U(\C)$ groups a $U(\C)$ semigroup?
\end{question}

(We know that every completely regular $U(\C)$ semigroup is a semilattice of 
Rees matrix semigroups over $U(\C)$ groups by \cite[Theorem~7]{bcp_contextfree}.)

\section*{Acknowledgements}

This research was supported by the Funda\c{c}\~{a}o para a Ci\^{e}ncia e a Tecnologia (Portuguese Foundation for Science and Technology)
through an {\sc FCT} post-doctoral fellowship ({\sc SFRH}/{\sc BPD}/121469/2016) and the projects {\sc UID}/{\sc MAT}/00297/2013 (Centro de Matem\'{a}tica e Aplica\c{c}\~{o}es) and {\sc PTDC}/{\sc MAT-PUR}/31174/2017.

The author is grateful to Alan Cain for helpful discussions, and particularly for the trick 
with the idempotents.

\end{document}